\newtheorem{theorem}{Theorem}[section]
\newtheorem{lemma}[theorem]{Lemma}
\newtheorem{proposition}[theorem]{Proposition}
\newtheorem{corollary}[theorem]{Corollary}
\newtheorem{Thm}{Theorem}
\theoremstyle{definition}
\newtheorem{definition}[theorem]{Definition}
\theoremstyle{remark}
\newtheorem{remark}[theorem]{Remark}
\newenvironment{definition-proposition}{\begin{def-prop} \em}{\end{def-prop}}
\def\P{\mathbb P_{\mathbb C}}
\def\H{\mathbb H}
\numberwithin{equation}{section}
\newcommand{\C}{\mathbb{C}}
\begin{document}

\title{  \sc{On the Equicontinuity Region of Discrete  Subgroups of} $PU(1,n)$}
\author{Angel Cano \& Jos\'e Seade}
\address{ Instituto de Matem\'aticas Unidad Cuernavaca, Universidad Nacional Autonoma de M\'exico, Avenida  Universidad sin n\'umero, Colonia Lomas de Chamilpa, Cuernavaca,  Morelos, M\'exico}
\email{angel@matcuer.unam.mx\\ jseade@matcuer.unam.mx}
\thanks{Research partially supported by grants from CONACYT   and PAPIIT-UNAM, Mexico, and by the Abdus Salam ICTP, Trieste, Italy.}
\keywords{Equicontinuity, quasi-projective transformations, complex hyperbolic space, discrete groups, limit set}

\subjclass{Primary: 32Q45, 37F45; Secondary 22E40, 57R30}



\begin{abstract}
 Let $ G  $ be a discrete subgroup of $PU(1,n)$. Then  $ G  $ acts on $\mathbb {P}^n_\mathbb C$
preserving the unit ball $\mathbb {H}^n_\mathbb {C}$, where it acts by isometries with respect to
the Bergman metric.    In this work we determine the equicontinuty region $Eq(G)$ of $G$ in $\mathbb  P^n_{\mathbb  C}$:  It is the complement of   the union of all complex projective
hyperplanes in $\mathbb {P}^n_{\mathbb C}$ which are tangent to $\partial \mathbb {H}^n_\mathbb {C}$
at  points in the Chen-Greenberg limit set $\Lambda_{CG}( G  )$, a closed $G$-invariant subset of $\partial \mathbb {H}^n_\mathbb {C}$, which is minimal for non-elementary groups. We also prove that the action on  $Eq(G)$ is discontinuous.

\end{abstract}

\maketitle

\section*{ Introduction}
 Let $PU(1,n)\subset PSL(n+1,\C)$ be the group of  automorphisms of
$\mathbb {P}^n_\mathbb {C}$ that preserve
 the ball
$$\{[z_0 : z_1: \ldots : z_{n}] \in \mathbb {P}^n_{\C} : |z_1|^2 + |z_2|^2+\ldots + |z_n|^2< |z_{0}|^2\}\,.$$
We   equip this ball with the Bergman metric, so we get a model for the complex hyperbolic space $\mathbb {H}^n_\mathbb {C}$, with $PU(1,n)$ as its group of holomorphic isometries. If $ G  \subset PU(1,n)$ is a discrete subgroup, then its limit set
$\Lambda_{CG}( G  )$  was defined by Chen-Greenberg in \cite{CG}
 as the set of accumulation points of the
$ G  $-orbits in $\mathbb {H}^n_{\mathbb C}$. As for conformal Kleinian
groups, this limit set  is contained in the ``sphere at
infinity'' $\partial \mathbb H^{n} _{\mathbb C}$, and it is a closed invariant set, which either has
cardinality $\leq 2$ or else it has infinitely many points, all orbits in it are dense and it is the unique minimal closed $G$-invariant set in $\mathbb {H}^n_\mathbb {C}$.
 The action of $G$ on $\mathbb H^{n} _{\mathbb C}$, being by isometries, is discontinuous and equicontinuous.

 We notice  that $G$ is  by definition  also a subgroup of $PSL(n+1, \mathbb C)$ so it acts on all of $\P^n$, and  it is natural to look for information
  about its  action  on all of $ \mathbb P^{n} _{\mathbb C}$, where the action is no longer isometric. This is analogous to considering a classical fuchsian group $\Gamma \subset PSL(2,\mathbb R)$ and thinking of it as acting in $\P^1$ preserving a ball, which serves as model for the real hyperbolic plane.

  In this article we
 interested in studying the region of equicontinuity in $\mathbb P^n_{\mathbb C}$ of discrete subgroups of $PU(1,n)$.   This is interesting, among other  reasons, because  equicontinuity is a gate for the analytic study of the dynamics of discrete subgroups of $PSL(n+1, \mathbb C)$.

 We prove:

\begin{Thm}\label{Thm:main}
Let $ G  \subset PU(1,n)$ be a discrete subgroup and let $Eq(G)$ be its equicontinuity region in $ \mathbb P^{n} _{\mathbb C}$. Then  $ \mathbb P^{n} _{\mathbb C} \setminus Eq( G  ) $   is the union
of all complex projective hyperplanes  tangent to $\partial \mathbb H^n
_{\mathbb C}$ at points in $\Lambda_{CG}( G  )$, and
 $ G  $ acts discontinuously on $Eq( G  )$. Furthermore,   the  set of accumulation points of the $G$-orbit  of every compact set $K\subset Eq( G  )$ is contained in $\Lambda_{CG}( G )$.

\end{Thm}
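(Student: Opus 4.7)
The plan is to control sequences of pairwise distinct elements of $G$ through their limits as quasi-projective transformations of $\mathbb P^n_{\mathbb C}$. For $p\in \partial \mathbb H^n_{\mathbb C}$, let $T_p$ denote the complex projective hyperplane tangent to $\partial \mathbb H^n_{\mathbb C}$ at $p$. Given a sequence $\{g_m\}\subset G$ of distinct elements, the Cartan (KAK) decomposition of $PU(1,n)$ writes each $g_m = k_m a_m k'_m$, where $k_m,k'_m$ lie in the maximal compact subgroup $P(U(1)\times U(n))$ and $a_m$ is hyperbolic with translation length $t_m$ along a fixed geodesic of $\mathbb H^n_{\mathbb C}$. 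Discreteness forces $t_m\to\infty$ along a subsequence; passing to a further subsequence with $k_m\to k$ and $k'_m\to k'$, and choosing matrix lifts of the $g_m$ in $U(1,n)$, the rescaled matrices $e^{-t_m}g_m$ converge to a rank-one endomorphism $\phi$ of $\mathbb C^{n+1}$. As a quasi-projective map of $\mathbb P^n_{\mathbb C}$, $\phi$ is defined off a hyperplane $H^-\subset \mathbb P^n_{\mathbb C}$ with image a single point $p^+\in\partial \mathbb H^n_{\mathbb C}$. The KAK picture identifies $H^-$ with the tangent hyperplane $T_{p^-}$ at a second boundary point $p^-$; both $p^\pm$ lie in $\Lambda_{CG}(G)$ since they are accumulation points of $G$-orbits under $g_m$ and $g_m^{-1}$.

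Set $U := \mathbb P^n_{\mathbb C} \setminus \bigcup_{p\in \Lambda_{CG}(G)} T_p$. To prove $U \subseteq Eq(G)$ together with the statement on compact orbits, fix a compact $K \subset U$ and any sequence $\{g_m\}$ of distinct elements of $G$. The construction above produces a subsequential quasi-projective limit $\phi$ whose indeterminacy locus $T_{p^-}$ is removed in forming $U$, hence is disjoint from $K$; consequently $g_m\to\phi$ uniformly on $K$ in the Fubini--Study metric, with image the single point $p^+ \in \Lambda_{CG}(G)$. This simultaneously yields equicontinuity of $\{g_m|_K\}$ and the third assertion that all accumulation points of $GK$ lie in $\Lambda_{CG}(G)$. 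For the reverse inclusion $\mathbb P^n_{\mathbb C}\setminus U \subseteq \mathbb P^n_{\mathbb C}\setminus Eq(G)$, fix $p\in\Lambda_{CG}(G)$ and $z\in T_p$; using density of attracting fixed points of loxodromic elements of $G$ inside $\Lambda_{CG}(G)$ in the non-elementary case (the elementary case being handled by explicit normal forms), one produces a sequence $g_m\in G$ whose quasi-projective limit has image $\{p\}$ and indeterminacy locus $T_p$. On a compact neighborhood $N$ of $z$, points of $N\setminus T_p$ are then mapped into arbitrarily small neighborhoods of $p$ while nearby points on $T_p$ are not, violating equicontinuity at $z$.

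Discontinuity on $Eq(G)$ follows formally from the third assertion: for compact $K,L\subset Eq(G)$, the accumulation set of $GK$ lies in $\Lambda_{CG}(G)\subset \mathbb P^n_{\mathbb C}\setminus Eq(G)$ and so is disjoint from $L$, forcing $\{g\in G:gK\cap L\neq\emptyset\}$ to be finite. The principal obstacle in this scheme is the non-equicontinuity direction: one must verify that at every $p\in\Lambda_{CG}(G)$, including parabolic limit points where no single element of $G$ has $p$ as a hyperbolic attractor, there exists a sequence whose quasi-projective limit degenerates precisely along $T_p$. For loxodromic $p$ this is transparent; for parabolic $p$ it requires a more careful analysis using the Heisenberg structure at $p$ and the stabilizer of $p$ in $G$.
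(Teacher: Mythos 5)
Your overall strategy --- passing to quasi-projective limits of sequences of distinct elements, with kernel a hyperplane tangent to $\partial\mathbb H^n_{\mathbb C}$ and image a single boundary point --- is the same as the paper's, and your Cartan $KAK$ route to the rank-one limit is a legitimate substitute for the paper's compactness argument (normalizing lifts by their largest entry, Proposition \ref{p:completes} and Lemma \ref{l:conv}). The inclusion of the complement of the tangent hyperplanes into $Eq(G)$, the statement about accumulation points of $GK$, and the formal deduction of discontinuity from that statement are all sound. The genuine gap is exactly the one you flag in your last paragraph, and it is a real gap in the argument as written: producing, for each individual $p\in\Lambda_{CG}(G)$, a sequence whose quasi-projective limit degenerates precisely along $T_p$ is problematic at parabolic limit points, and the density of loxodromic attracting fixed points in $\Lambda_{CG}(G)$ that you invoke is itself left unproved.

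The paper dissolves this difficulty with two observations you did not use. First, $Eq(G)$ is open by definition, so its complement is closed, and it is clearly $G$-invariant; hence it suffices to exhibit a single $x_0\in\Lambda_{CG}(G)$ with $T_{x_0}\subset\mathbb P^n_{\mathbb C}\setminus Eq(G)$ (take a boundary fixed point of one loxodromic or parabolic element and iterate it), after which minimality of $\Lambda_{CG}(G)$ for non-elementary $G$ (Theorem \ref{t:minimal}) gives $\overline{Gx_0}=\Lambda_{CG}(G)$, and continuity of $p\mapsto T_p$ sweeps out all of $\mathcal C(G)$. Second, and even more directly: every $p\in\Lambda_{CG}(G)$ is by definition $\lim g_m(x)$ for some sequence in $G$ and some $x\in\mathbb H^n_{\mathbb C}$, so Corollary \ref{c:inversas} applied to $(g_m^{-1})$ produces a quasi-projective limit whose kernel is exactly $T_p$ --- parabolic points included, with no analysis of the Heisenberg structure at $p$. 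You should also tighten the mechanism by which equicontinuity fails along $T_p$: your assertion that nearby points \emph{on} $T_p$ are ``not mapped near $p$'' is unsubstantiated (those points lie outside $\overline{\mathbb H}^n_{\mathbb C}$ and a priori could also be attracted to $p$). The paper instead shows (Proposition \ref{p:disc}) that the images under $\gamma_m$ of arbitrarily small neighborhoods of a kernel point, intersected with a transversal line, accumulate on an entire projective line, which is what genuinely kills normality in every neighborhood of a point of $Ker(\gamma)$.
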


The proof of this theorem relies on   {\it quasi-projective} transformations, introduced by Furstenberg (see \cite{AMS,Furs}), which provide a completion of the non-compact Lie group $PSL(n+1,\C)$.

We ought to mention that this work was inspired by Navarrete's theorem in  \cite{pablo}, where the author
studies discrete subgroups of $PU(1,2)$ and compares the aforementioned Chen-Greenberg limit set with a different notion of limit set, due to R. Kulkarni \cite{kulkarni}, which has the property of granting that the action on its complement  $\Omega_{Kul}(G)$  is discontinuous. The theorem in \cite{pablo} says that the Kulkarni region of discontinuity $\Omega_{Kul}(G)$ is the complement of the union of all projective lines in $\P^2$ wich are tangent to $\partial
\H^2_\C$ at points in the Chen-Greenberg limit set. That proof  also shows implicitly   that for $n =2$ the regions $Eq( G  )$ and $\Omega_{Kul}(G)$ coincide: a fact  which is not known in higher dimensions.
Actually, for $n = 2$ an additional dimensional argument,
together with Theorem 1 give a simpler proof of the theorem in [8]; the
details are given in  \cite{CNS}.

We are grateful to Professor D. P. Sullivan for suggesting us to look at the equicontinuity set of  groups acting on $\P^n$.
Part of this research was done while the authors were visiting the ICTP at Trieste, Italy, and they are grateful to this institution and its people, for their support and hospitality.

\section{Preliminaries on projective and complex hyperbolic geometry}

We recall that the complex projective space $\mathbb {P}^n_{\mathbb {C}}$
is defined as:
$$ \mathbb {P}^{n}_{\mathbb {C}}=(\mathbb {C}^{n+1}- \{0\})/\C^* \,,$$
where the non-zero complex numbers are acting coordinate-wise.
  This is   a  compact connected  complex $n$-dimensional
manifold.

If $[\mbox{ }]_n:\mathbb {C}^{n+1}\setminus\{0\}\rightarrow
\mathbb {P}^{n}_{\mathbb {C}}$ is the quotient map, then a
non-empty set  $H\subset \mathbb {P}^n_{\mathbb {C}}$ is said to be a
projective subspace of dimension $k$  ({\it i.e.}, $dim_{\mathbb {C}}
(H)=k$) if there is a  $\mathbb {C}$-linear subspace  $\widetilde H$
of dimension $k+1$ such that $[\widetilde H]_n=H$. Hyperplanes are the  projective subspaces of dimension  $n-1$. Given distinct points
$p,q\in \mathbb{P}^n_{\mathbb{C}}$,   there is a
unique  complex projective subspace of dimension 1  passing through $p$ and
$q$. Such a subspace  will be called a complex (projective) line
and  denoted by $\overleftrightarrow{p,q}$; this is the image
under  $[\mbox{ }]_n$ of a two-dimensional linear subspace of
$\C^{n+1}$.

 It is clear  that every linear automorphism of $\C^{n+1}$ defines a
holomorphic automorphism of $\P^n$, and it is well-known that every automorphism of $\P^n$
arises in this way. Thus one has that the group of projective
automorphisms is:
$$PSL(n+1, \mathbb {C}) \,:=\, GL({n+1}, \C)/(\C^*)^{n+1} \cong SL({n+1}, \mathbb {C})/\mathbb {Z}_{n+1} \,,$$
where $(\C^*)^{n+1} $ is being regarded as the subgroup of
diagonal matrices with a single non-zero eigenvalue, and we
consider the action of $\mathbb {Z}_{n+1}$ (regarded as the roots of
unity) on  $SL(n+1, \mathbb {C})$ given by the usual scalar
multiplication. Then $PSL(n+1, \mathbb {C})$ is a Lie group whose
elements are called projective transformations.

We denote also by $[\mbox{  }]_{n}: SL(n+1, \mathbb {C})\rightarrow
PSL(n+1, \mathbb {C})$    the quotient map, which indeed is restriction of a map defined on the general linear group $GL(n+1,\mathbb  C)$. Given     $ \gamma  \in
PSL(n+1, \mathbb {C})$  we  say that  $\widetilde \gamma  \in GL(n+1,
\mathbb {C})$  is a  lift of $ \gamma  $ if there is an scalar $r\in
\mathbb {C}^*$ such that $r\widetilde \gamma  \in SL(n, \mathbb {C})$ and
$[r\widetilde  \gamma  ]_{n}= \gamma  $.

 Notice that
$ PSL(n+1, \mathbb {C})$  acts  transitively, effectively and by
biholomorphisms  on $\mathbb{P}^{n}_{\mathbb{C}}$, taking
   projective
subspaces  into projective subspaces.\\

In what follows $\C^{1,n}$ is a copy of $\C^{n+1}$ equipped with a
Hermitian form of signature $(1,n)$ that we assume is given by:
$$
<u,v>=-u_0\overline{v_0}+\sum_{j=1}^{n}u_j\overline{v_j} \,,
$$
where $u=(u_0,u_1,\ldots,u_n)$ and  $v=(v_0,v_1,\ldots,v_n)$. A
vector $v$ is called negative, null or positive depending (in the
obvious way) on the value of  $<v,v>$; we denote the set of
negative, null or positive vectors by $N_-, N_0$ and $N_+$
respectively. Thus one has:
$$N_- \,=\, \{u \in \C^{1,n} \, \large| \, u_0\overline{v_0} >
\sum_{j=1}^{n}u_j\overline{v_j}\} \, .
$$
The image $\mathbb {B}$ of $N_-$ in
$\P^n$ under the map $[\mbox{ }]_{n}$ is diffeomorphic to a
ball of real dimension $2n$, with boundary a sphere
$\mathbb {S}^{2n-1}$, which is the image of $N_0$.

If we let  $U(1,n) \subset GL(n+1, \C)$  be the subgroup  consisting of the elements  that preserve the
above Hermitian form, then its projectivization $[U(1,n)]_n$  is a subgroup of  $ PSL(n+1,
\mathbb {C})$ that we denote by $PU(1,n)$.

It is easy to see that $PU(1,n)$ acts transitively on $\mathbb {B}$
with isotropy $U(n)$. Let $\underline 0$ denote the center of
the ball $\mathbb {B}$, consider the space $T_{\underline 0} \mathbb {B}
\cong \C^n$ tangent to $\mathbb {B}$ at $\underline 0$, and put on it
the usual Hermitian metric on $\C^n$. Now we use the action of
$PU(1,n)$ to spread the metric, using that the action is
transitive and the isotropy is  $U(n)$, which preserves the usual
metric on $\C^n$. We thus get a Hermitian metric on $\mathbb {B}$,
which is clearly homogeneous. This metric is, up to scaling, the Bergman metric,
and gives the model we use for
complex hyperbolic $n$-space, that we denote by $\mathbb {H}_{\C}^n$.
It is clear from this construction that  $PU(1,n)$  is the group
of holomorphic isometries of $\mathbb {H}_{\C}^n$. Its elements  are classified as
follows  \cite{CG}:

\begin{definition}\label{d:classif}
The non-trivial elements of $PU(n, 1)$ fall into three general conjugacy types, depending on the
number and location of their fixed points:

\begin{enumerate}
\item Elliptic elements have a fixed point in $\mathbb {H}^n_\mathbb {C}$;

\item parabolic elements have a single fixed point on the boundary of $\mathbb {H}^n_\mathbb {C}$;
\item loxodromic elements have exactly two fixed points on the boundary of $\mathbb {H}^n_\mathbb {C}$;
\end{enumerate}
This exhausts all possibilities, see \cite{Goldman} for details.
\end{definition}

\begin{definition} Set  $\overline{\mathbb {H}}^n_\mathbb {C} =
  {\mathbb {H}}^n_\mathbb {C}  \cup \partial {\mathbb {H}}^n_\mathbb {C} $, and let $G$ be a discrete subgroup of $PU(1,n)$. The {\it
region of discontinuity} of $G$ in $\overline{\mathbb {H}}^n_\mathbb {C}$
is the set $\Omega = \Omega(G)$ of all points in
$\overline{\mathbb {H}}^n_\mathbb {C}$ which have a neighborhood that intersects only
finitely many copies of its $G$-orbit.
\end{definition}

Since $\mathbb {H}^n_\mathbb {C}$ with the Bergman metric is a connected,
metric space where each closed ball is compact and $PU(1,n)$ acts  by isometries,   the Arzel\`a-Ascoli theorem yields \cite{rat}:

\begin{theorem} \label{t:acc} Let $G$ be a subgroup of $PU(1,n)$. The following
three conditions are equivalent:

\begin{enumerate}

\item \label{i:1acc} The subgroup $G \subset PU(1,n)$ is discrete.

\item \label{i:2acc}The region of discontinuity of $G$  in
$\mathbb {H}^n_\mathbb {C}$ is all of $\mathbb {H}^n_\mathbb {C}$.

\item  \label{i:3acc} The region of discontinuity of $G$ in
$\mathbb {H}^n_\mathbb {C}$ is non-empty.
\end{enumerate}

\end{theorem}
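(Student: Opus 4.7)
The plan is to prove the three implications (ii) $\Rightarrow$ (iii) $\Rightarrow$ (i) $\Rightarrow$ (ii). The first is immediate since $\mathbb{H}^n_\mathbb{C}$ is non-empty. For (iii) $\Rightarrow$ (i), I would argue contrapositively: if $G$ is not discrete, then there is a sequence of distinct $g_k \in G$ with $g_k \to \mathrm{id}$ in $PU(1,n)$. Since the action of $PU(1,n)$ on $\mathbb{H}^n_\mathbb{C}$ is smooth, for any $x \in \mathbb{H}^n_\mathbb{C}$ we have $g_k(x) \to x$, so every neighborhood of every point meets infinitely many orbit points. Hence $\Omega(G)$ is empty.

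The content of the theorem is in the implication (i) $\Rightarrow$ (ii), which I would prove by a standard Arzel\`a-Ascoli argument. Fix $x \in \mathbb{H}^n_\mathbb{C}$ and $r>0$ and consider the closed Bergman ball $\overline{B}_r(x)$, which is compact since the Bergman metric is complete with the Heine--Borel property. The goal is to show that the set
\[
F \;=\; \{g \in G : g(\overline{B}_r(x))\cap \overline{B}_r(x) \ne \emptyset\}
\]
is finite; this gives discontinuity at $x$. Suppose for contradiction that $F$ contains an infinite sequence of distinct elements $g_k$. Picking $y_k \in \overline{B}_r(x)$ with $g_k(y_k) \in \overline{B}_r(x)$ and using the isometric action, the triangle inequality yields $d(x, g_k(x)) \le 2r$, so the orbit $\{g_k(x)\}$ lies in the compact set $\overline{B}_{2r}(x)$.

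Now the family $\{g_k\}$ is uniformly equicontinuous (each element is a Bergman isometry, hence $1$-Lipschitz) and pointwise bounded at $x$, so by Arzel\`a-Ascoli a subsequence $g_{k_j}$ converges uniformly on compact sets of $\mathbb{H}^n_\mathbb{C}$ to a map $g_\infty$, which is automatically an isometry and therefore lies in $PU(1,n)$. Since the topology on $PU(1,n)$ agrees with the compact-open topology on $\mathrm{Isom}(\mathbb{H}^n_\mathbb{C})$, the elements $g_{k_j}g_{k_1}^{-1}$ form an infinite sequence of distinct elements of $G$ converging to $g_\infty g_{k_1}^{-1}$, so that the differences $g_{k_j}g_{k_l}^{-1}$ (for large enough $j,l$, $j\ne l$) accumulate at the identity. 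This contradicts the discreteness of $G$.

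The main obstacle is the last step: to conclude from uniform convergence on compacta that $g_{k_j} \to g_\infty$ in the Lie group topology of $PU(1,n)$. I would invoke that the isometry group of a connected Riemannian manifold, given the compact-open topology, is a Lie group whose natural topology coincides with this compact-open topology (Myers--Steenrod), so the two notions of convergence agree. The rest of the argument is the routine extraction of a subsequence and the use of the triangle inequality.
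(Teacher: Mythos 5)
Your proof is correct and is precisely the argument the paper has in mind: the paper gives no written proof of this theorem, merely observing that it follows from the Arzel\`a--Ascoli theorem because $\mathbb{H}^n_\mathbb{C}$ is a connected metric space in which closed Bergman balls are compact and $PU(1,n)$ acts by isometries, and citing Ratcliffe for the details. The one step you should tighten is the claim that the locally uniform limit $g_\infty$ is ``automatically an isometry'' lying in $PU(1,n)$: surjectivity of $g_\infty$ requires also extracting a convergent subsequence from the inverses $g_{k_j}^{-1}$ (whose orbit of $x$ likewise stays in $\overline{B}_{2r}(x)$) --- or one can bypass $g_\infty$ entirely by noting that the Cauchy property of uniform convergence on compacta already gives $g_{k_j}g_{k_{j+1}}^{-1}\to \mathrm{id}$ --- and it is the holomorphicity of the limit that places it in $PU(1,n)$ rather than merely in the full isometry group $\mathrm{Isom}(\mathbb{H}^n_\mathbb{C})$, which also contains antiholomorphic maps.
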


 The following characterization of finite groups will be useful later, see \cite{CG, pablo}.

\begin{proposition} \label{c:clasfin}
Let $ G  \subset PU(1,n)$ be a discrete group, then  $ G  $ is finite if and only if
every element $ \gamma   \in  G  $ has finite order
 $o( \gamma  )$.
\end{proposition}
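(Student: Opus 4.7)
The forward implication is Lagrange's theorem: every element of a finite group has order dividing the order of the group. For the converse, suppose every $\gamma \in G$ has finite order. Parabolic elements (non-trivial unipotent isometries fixing a single point of $\partial \mathbb{H}^n_\mathbb{C}$) and loxodromic elements (translating non-trivially along a geodesic axis) both have infinite order, so by the trichotomy in Definition \ref{d:classif} every non-identity element of $G$ is elliptic, and hence fixes at least one point of $\mathbb{H}^n_\mathbb{C}$.

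My plan is to produce a single point $x_0 \in \mathbb{H}^n_\mathbb{C}$ fixed by all of $G$; then $G$ embeds in $\mathrm{Stab}_{PU(1,n)}(x_0)$, which is conjugate to the maximal compact subgroup $U(n)$, and a discrete subgroup of a compact topological group is automatically finite. By Cartan's fixed-point theorem for Hadamard manifolds (or directly, using that closed balls in $\mathbb{H}^n_\mathbb{C}$ are compact and the action is properly discontinuous by Theorem \ref{t:acc}), it suffices to exhibit a bounded $G$-orbit.

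The main obstacle is therefore to rule out unbounded orbits. Assume for contradiction that $G \cdot x_0$ is unbounded; extracting a subsequence, $\gamma_n x_0 \to p \in \partial \mathbb{H}^n_\mathbb{C}$, so $\Lambda_{CG}(G) \neq \emptyset$, and I apply the Chen-Greenberg dichotomy recalled in the introduction. If $\Lambda_{CG}(G)$ is infinite, density of orbits in the limit set lets one pick three distinct limit points and run a ping-pong construction, producing a loxodromic element of infinite order — contradicting the hypothesis. If $|\Lambda_{CG}(G)| \leq 2$, a finite-index subgroup $G_0$ of $G$ fixes the limit set pointwise and so lies in the $PU(1,n)$-stabilizer $P$ of one or two boundary points. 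Writing $P = MAN$ in Langlands form, with $M$ compact, $A \cong \mathbb{R}$ the loxodromic axial factor, and $N$ the Heisenberg parabolic factor, the projection $P \to A$ kills every torsion element (since $A$ is torsion-free), so $G_0 \subset MN$; and $N$, being a simply connected nilpotent Lie group, is torsion-free, so $G_0 \cap N = \{e\}$ and $G_0$ embeds in $M$. Thus $G_0$ is discrete in a compact group, hence finite, which forces the orbit $G \cdot x_0$ to be bounded — contradicting the standing assumption and completing the proof.
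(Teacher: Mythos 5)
The paper itself offers no proof of this proposition: it simply refers the reader to \cite{CG, pablo}, so there is no internal argument to compare yours against. Your overall strategy --- finite order forces every nontrivial element to be elliptic, then exhibit a bounded orbit, apply Cartan's fixed point theorem, and finish by discreteness inside a compact stabilizer --- is the standard route, and the forward implication and the reduction to ``all elements elliptic'' are fine. The trouble is that both cases of your contradiction argument contain a genuine gap, and they sit exactly where the real content of the proposition lies.

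In Case A you invoke the existence of a loxodromic element in a non-elementary discrete group. This is true but it is itself a nontrivial theorem (essentially Chen--Greenberg's; in this paper's framework it would have to be extracted from Lemma \ref{l:conv} together with Kamiya's result \cite{kamiya}). ``Pick three distinct limit points and run a ping-pong construction'' is not a proof of it: ping-pong needs attracting and repelling neighborhoods with contracting dynamics, which come from the quasi-projective limit analysis, not from the mere presence of three limit points. If you mean to cite this as known, say so explicitly; as written, the hardest step is asserted rather than proved. In Case B the last step fails as stated. From $G_0\subset MN$ and $G_0\cap N=\{e\}$ you do get an injective homomorphism $G_0\to M\cong MN/N$, but the quotient map $MN\to M$ does not carry discrete subgroups to discrete subgroups (the $N$-components of a sequence in $G_0$ are uncontrolled, as the example $\{(e^{ik\alpha},k)\}\subset S^1\times\mathbb{R}$ already shows), so ``$G_0$ is discrete in a compact group'' does not follow; and injectivity into a compact group alone does not give finiteness, since an infinite torsion group can embed in a compact group, e.g.\ $\mathbb{Q}/\mathbb{Z}\hookrightarrow S^1$. (The two-point case, where $G_0$ lands literally inside $MA$ and hence inside $M$, is fine; the problem is the parabolic stabilizer.) To close Case B you need an actual argument that a discrete torsion subgroup of $M\ltimes N$ is finite --- for instance a commutator computation showing that two elliptic elements of the stabilizer with different fixed-point sets generate an element of infinite order, or an appeal to Schur's theorem on periodic linear groups combined with discreteness.
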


\section{Quasi-projective maps and equicontinuity}
Let us construct a ``completion" of the Lie group
$PSL(n,\mathbb {C})$ which is known as the space of quasi-projective
maps. These were introduced by Furstenberg and they are studied
in  \cite{AMS}.

 Let $\widetilde M:\mathbb {C}^{n+1}\rightarrow \mathbb {C}^{n+1}$ be a non-zero linear transformation which is not
 necessarily invertible. Let $Ker(\widetilde M)$ be its kernel and let $Ker(M)$ denote its projectivization. That is, $Ker(M) :=  [Ker(M)\setminus\{0\}]_n$ where $ [ \;]_n$ is the projection $\C^{n+1} \to \P^n$.
 Then the quasi-projective transformation induced by
 $\widetilde M$  is  the map $M:\mathbb {P}^{n}_\mathbb {C}\setminus Ker(M) \rightarrow
 \mathbb {P}^{n}_\mathbb {C}$ given by:
 $$
M([v])=[M(v)]_n\,;
 $$
this is well defined because $v\notin Ker(M)$. Moreover,
the  commutative diagram below implies that $M$
is a holomorphic map:
$$
 \xymatrix{
\mathbb {C}^{n+1}\setminus Ker (\widetilde M) \ar[rr]^{\widetilde M} \ar[d]_{[\mbox{ }]_n}&  & \mathbb {C}^{n+1}\setminus \{0\}\ar[d]^{[\mbox{ }]_n}\\
\mathbb {P}^{n}_\mathbb {C}\setminus Ker(M)
\ar[rr]_{M} && \mathbb {P}^{n}_\mathbb {C} } \;
$$
We denote by $QP(n,\mathbb {C})$ the space of all {\it quasi-projective maps} of
$\mathbb {P}^n_\mathbb {C}$. That is:
$$
 QP(n,\mathbb {C})= \{M = [\widetilde M]_n: \widetilde M \textrm{ is a non-zero linear
transformation of }
 \mathbb {C}^{n+1}\}
$$
Clearly  $PSL(n,\mathbb {C})\subset QP(n,\mathbb {C})$.

 A linear map $\widetilde
M:\mathbb {C}^{n+1}\rightarrow \mathbb {C}^{n+1}$ is said to be {\it a
lift
of} the quasi-projective map $M$ if $[\widetilde M]_n=M$. Conversely,
given a quasi-projective map  $M$ and
 a lift $\widetilde M$ we define
{\it the image of} $M$ as:
   $$Im(M)=[\widetilde {M}(\mathbb {C}^{n+1})\setminus\{0\}]_n\,.$$
Obviously we have $dim_\mathbb {C}(Ker(M))+dim_\mathbb {C}( Im(M))=n-1$.

\begin{proposition} \label{p:completes}
Let  $( \gamma_m)_{m\in \mathbb {N}}\subset PSL(n,\mathbb {C})$ be a
sequence of distinct elements, then  there  is a subsequence of
$( \gamma_m)_{m\in \mathbb {N}}$, still denoted $( \gamma_m)_{m\in
\mathbb {N}}$, and $ \gamma  \in QP(n,\mathbb {C})$ such that $ \gamma_m
\xymatrix{ \ar[r]_{m \rightarrow \infty}&}  \gamma  $ uniformly on
compact sets of $\mathbb {P}^n_\mathbb {C}\setminus Ker( \gamma  )$.

\end{proposition}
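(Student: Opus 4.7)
My plan is to exploit the compactness of the projective space of $(n+1)\times(n+1)$ matrices through a norm-renormalization, and then transfer the resulting linear convergence to uniform convergence in $\mathbb{P}^n_\mathbb{C}$ off the limiting kernel.

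First, for each $m$ pick a lift $\widetilde{\gamma}_m \in SL(n+1,\mathbb{C})$ and rescale to $\widetilde{M}_m := \widetilde{\gamma}_m/\|\widetilde{\gamma}_m\|$, where $\|\cdot\|$ is any matrix norm. Then $\|\widetilde{M}_m\|=1$ and $[\widetilde{M}_m]_n=\gamma_m$. The unit sphere in the finite-dimensional space of $(n+1)\times(n+1)$ complex matrices is compact, so after passing to a subsequence we have $\widetilde{M}_m \to \widetilde{M}$ with $\|\widetilde{M}\|=1$. In particular $\widetilde{M}\neq 0$, so $\gamma:=[\widetilde{M}]_n$ is a well-defined element of $QP(n,\mathbb{C})$.

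Now fix a compact set $K\subset \mathbb{P}^n_\mathbb{C}\setminus Ker(\gamma)$. Using the compactness of the unit sphere $S^{2n+1}\subset \mathbb{C}^{n+1}$ and the continuity of $[\,\cdot\,]_n:S^{2n+1}\to \mathbb{P}^n_\mathbb{C}$, choose a compact lift $\widetilde{K}\subset S^{2n+1}$ with $[\widetilde{K}]_n=K$. Since $K\cap Ker(\gamma)=\emptyset$, the continuous function $v\mapsto \|\widetilde{M}(v)\|$ is strictly positive on the compact set $\widetilde{K}$, so there exists $c>0$ with $\|\widetilde{M}(v)\|\geq c$ for all $v\in \widetilde{K}$. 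Because $\widetilde{M}_m\to\widetilde{M}$ in operator norm, the convergence $\widetilde{M}_m(v)\to \widetilde{M}(v)$ is uniform on $\widetilde{K}$, and hence $\|\widetilde{M}_m(v)\|\geq c/2$ on $\widetilde{K}$ for all $m$ large. Since $[\,\cdot\,]_n$ is uniformly continuous on any compact subset of $\mathbb{C}^{n+1}$ bounded away from the origin, the uniform convergence of the $\widetilde{M}_m$ on $\widetilde{K}$ descends to uniform convergence $\gamma_m\to \gamma$ on $K$, which is the conclusion.

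The step I expect to require the most care is the last one: transferring uniform convergence in $\mathbb{C}^{n+1}\setminus\{0\}$ to uniform convergence in $\mathbb{P}^n_\mathbb{C}$. A priori, the original $SL$-lifts $\widetilde{\gamma}_m$ may have norms diverging to infinity, so one cannot work with them directly; the whole argument hinges on the renormalization, which sacrifices the $SL$-normalization for a compactness gain but preserves the projective class. Once compactness of $\widetilde{K}$ and the positive lower bound $c$ are secured, the rest is routine continuity, and uniformity on $K$ follows from uniformity on the compact lift.
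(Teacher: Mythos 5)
Your proposal is correct and follows essentially the same route as the paper: renormalize the lifts to lie on a compact set of matrices (the paper uses the maximum of the entries rather than a general matrix norm), extract a convergent subsequence, and descend to uniform convergence on $K$ via a compact lift of $K$. Your version is in fact slightly more careful than the paper's, since you make explicit the lower bound $\|\widetilde{M}(v)\|\geq c$ on the lift of $K$ that justifies passing the uniform convergence through the projection $[\,\cdot\,]_n$.
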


\begin{proof}  For each $m$, let $\tilde { \gamma  }_m=( \gamma_{ij}^{(m)})\in SL(n,\mathbb {C})$ be
a lift of $ \gamma_m$. Define  $$\vert
 \gamma_m\vert=\max\{\vert \gamma_{ij}^{(n)}\vert:i,j=1,3\}.$$
Notice that  $\vert  \gamma_m \vert$ is independent of the choice of lift in $SL(n,\C)$, and that
$\vert  \gamma_m \vert^{-1}\tilde{ \gamma  }_m$ is again a lift of
$ \gamma_m$. Since every bounded sequence in $\mathbb {C}$ has a
convergent subsequence we deduce that there is a subsequence of
$(\vert  \gamma_m \vert^{-1}\tilde{ \gamma  }_m)$, still denoted by
$(\vert  \gamma_m \vert^{-1}\tilde{ \gamma  }_m)$, and a non-zero $(n \times n)$-matrix  $
\tilde  \gamma  =( \gamma_{ij})\in Gl(n,\mathbb {C})$, such that:
$$
\vert  \gamma_m \vert^{-1} \gamma_{ij}^{(m)}\xymatrix{ \ar[r]_{m
\rightarrow \infty}&}  \gamma_{ij}.
$$
This implies that $\vert  \gamma_m
\vert^{-1}  \tilde  \gamma_{m}\xymatrix{ \ar[r]_{m \rightarrow \infty}&}
\tilde  \gamma  $ uniformly on compact sets of $\mathbb {C}^n$, regarded as linear transformations .

Now let $K\subset \P^n-Ker( \gamma  )$ be a compact set, so
$ \widehat{K}=\{\vert k\vert^{-1}k:[k]_n\in K\}$ is a compact set which
satisfies  $[ \widehat{K}]_n=K$. Thus
\begin{equation}\label{c:uni}
\vert  \gamma_m \vert^{-1}	\tilde \gamma_{m}\xymatrix{ \ar[r]_{m
\rightarrow \infty}&} \tilde  \gamma   \, \textrm{ uniformly on }  \widehat{K}.
\end{equation}
From this equation  and the facts $[\vert  \gamma_m
\vert^{-1} \gamma_m]_n= \gamma_m$ and $[ \widehat {K}]_n = K$, we
conclude that:
$$  \gamma_m
\xymatrix{ \ar[r]_{m \rightarrow \infty}&}  \gamma   \,
\textrm{ uniformly on } K\,,$$
where $ \gamma  =[\tilde  \gamma  ]_n$
\end{proof}
In what follows we will say that the sequence $( \gamma_m)\subset
PSL(n,\mathbb {C})$  to $ \gamma  \in QP(n,\mathbb {C})$ in the sense of quasi-projective transformations  if
$ \gamma_m \xymatrix{ \ar[r]_{m \rightarrow \infty}&}  \gamma  $
uniformly on compact sets of $\mathbb {P}^n_\mathbb {C}\setminus
Ker( \gamma  )$.

\medskip

We now recall:

\begin{definition}
The {\it equicontinuity region} for a family $G$ of
endomorphisms of  $\mathbb {P}^n_\mathbb {C}$, denoted $Eq(G)$, is
defined to be the set of points $z\in \mathbb{P}^n_\mathbb{C}$ for
which there is an open neighborhood $U$ of  $z$   such that $G
\vert_U$ is a normal family.  (Where normal family means that every
sequence of distinct elements has a subsequence which converges
uniformly on compact sets.)
\end{definition}

\begin{proposition} \label{p:disc}
 Let $( \gamma_m)\subset PSL(n,\mathbb {C})$ be a sequence which
converges to $ \gamma  \in QP(n,\mathbb {C})$, with  $Ker( \gamma  )$ being a
hyperplane. Let  $p\in Ker( \gamma  )\setminus Im( \gamma  )$,  let $U$ be a
neighborhood of $p$ and $\ell$   a line such that  $Ker( \gamma  )\cap \ell=\{p\}$. Then there is a
subsequence of $( \gamma_m)$, still denoted $( \gamma_m)$,  and a line
$\ell_p$, such that for every open   neighborhood  $W$ of $p$ with
compact closure in $U$, the set of cluster points of $\{
 \gamma_m(\overline W\cap \ell)\}$ is $\ell_p$.
 \end{proposition}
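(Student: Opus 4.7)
The plan is to (i) extract a subsequence along which $\gamma_m(\ell)$ converges in the Grassmannian of projective lines in $\mathbb{P}^n_{\mathbb{C}}$ to some line $\ell_p$, and then (ii) verify the two inclusions ``cluster set $\subseteq \ell_p$'' and ``cluster set $\supseteq \ell_p$'' separately. Compactness of the Grassmannian makes (i) automatic. For (ii), the dimension formula $\dim Ker(\gamma)+\dim Im(\gamma)=n-1$ from the excerpt forces $Im(\gamma)$ to be a single point $q$; combined with the hypothesis $\ell\cap Ker(\gamma)=\{p\}$ and the quasi-projective convergence from Proposition 2.1, this yields $\gamma_m\to q$ uniformly on compact subsets of $\ell\setminus\{p\}$, and in particular $q\in\ell_p$ (take any fixed $x_0\in\ell\setminus\{p\}$ and use that $\gamma_m(x_0)\in\gamma_m(\ell)$ converges both to $q$ and into $\ell_p$).

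For cluster $\subseteq\ell_p$: given a cluster point $y=\lim\gamma_{m_k}(x_{m_k})$ with $x_{m_k}\in\overline{W}\cap\ell$, I would pass to a further subsequence making $x_{m_k}\to x^*\in\overline{W}\cap\ell$. If $x^*\neq p$, uniform convergence on a compact neighborhood of $x^*$ disjoint from $Ker(\gamma)$ forces $y=q\in\ell_p$; if $x^*=p$, then $\gamma_{m_k}(x_{m_k})\in\gamma_{m_k}(\ell)$ together with the Grassmannian convergence $\gamma_{m_k}(\ell)\to\ell_p$ forces $y\in\ell_p$.

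The reverse inclusion is the main obstacle, because a priori $\gamma_m(p)$ can converge to $q$, so the image of the single point $p$ need not reveal all of $\ell_p$. I would use instead the bijectivity of $\gamma_m$ on $\ell$. Set $K=\ell\setminus W$; this is a compact subset of $\ell$ disjoint from $Ker(\gamma)$ (since $p\in W$ and $Ker(\gamma)\cap\ell=\{p\}$), so $\gamma_m\to q$ uniformly on $K$, and for every open neighborhood $V$ of $q$ we have $\gamma_m(K)\subseteq V$ for $m$ large. Since $\gamma_m|_\ell$ is a bijection of $\ell$ onto $\gamma_m(\ell)$, this yields the key containment
\[
\gamma_m(\ell)\setminus V\;\subseteq\;\gamma_m(W\cap\ell)\;\subseteq\;\gamma_m(\overline{W}\cap\ell).
\]
Given $y\in\ell_p$ with $y\neq q$, I would choose such a $V$ avoiding $y$ and approximate $y$ by points $y_m\in\gamma_m(\ell)$ (which exist by the Grassmannian convergence $\gamma_m(\ell)\to\ell_p$); eventually $y_m\notin V$, so $y_m\in\gamma_m(\overline{W}\cap\ell)$, exhibiting $y$ as a cluster point. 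Finally, $q$ is itself a cluster point by taking $x_m$ equal to any fixed $x_0\in\overline{W}\cap\ell$ with $x_0\neq p$. The two inclusions together give cluster set $=\ell_p$, completing the proof.
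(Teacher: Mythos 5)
Your proposal is correct and follows essentially the same route as the paper: compactness of the Grassmannian to produce $\ell_p$, the observation that $Im(\gamma)$ is a single point $q$, uniform convergence of $\gamma_m$ to $q$ on compact subsets of $\ell\setminus\{p\}$, and the bijectivity of $\gamma_m|_\ell$ to pull points of $\gamma_m(\ell)$ away from $q$ back into $W\cap\ell$. If anything, your write-up is more complete than the paper's, which only argues the inclusion $\ell_p\subseteq{}$cluster set (for points $x\neq q$) and leaves the reverse inclusion and the point $q$ itself implicit.
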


 \begin{proof}  Let $Gr_2(\mathbb {C}^{n+1})$ be the grassmannian of complex 2-planes in $\C^{n+1}$.
 Since  $Gr_2(\mathbb {C}^{n+1})$
is compact, there is as subsequence of $( \gamma_m)$, still denoted
$( \gamma_m)$, and a line $\ell_p$ such that
$ \gamma_m(\ell)\xymatrix{ \ar[r]_{m \rightarrow \infty}&}\ell_p$.
On the other hand, since the sequence $( \gamma_m)$ converges  to
$ \gamma  $ and $\ell\cap Ker( \gamma  )=\{p\}$ we conclude that
$\ell_p\cap Im( \gamma  )=Im( \gamma  )$ is a point, say $q$. Let $x
\in\ell_p\setminus \{q\}$, then there is a sequence $(y_m)\subset
\ell$ such that $(y_m)$ is convergent and $ \gamma_m(y_m)\xymatrix{
\ar[r]_{m \rightarrow \infty}&} x$. This implies that the limit
point of $(y_m)$ lies in $Ker( \gamma  )$, thence such a
point is $p$. In short $y_m \xymatrix{ \ar[r]_{m \rightarrow
\infty}&}p $ and $ \gamma_m(y_m)\xymatrix{ \ar[r]_{m \rightarrow
\infty}&} x$.
 \end{proof}

As an inmediate consequence one has:

\begin{corollary} Let
$( \gamma_m)\subset PSL(n,\mathbb {C})$ be a sequence which converges
to $ \gamma  \in QP(n,\mathbb {C})$. If $Ker( \gamma  )$ is a hyperplane,
then the equicontinuity set is:
$$Eq(\{  \gamma_m:m\in \mathbb {N}\})=\mathbb {P}^n_\mathbb {C}\setminus
 Ker( \gamma  ).$$
 \end{corollary}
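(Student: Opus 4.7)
The plan is to prove the two inclusions separately. The inclusion $\P^n_{\mathbb{C}} \setminus Ker(\gamma) \subseteq Eq(\{\gamma_m\})$ is essentially contained in the hypothesis: any $p \notin Ker(\gamma)$ admits an open neighborhood $U$ with $\overline U$ disjoint from $Ker(\gamma)$, and then $\gamma_m \to \gamma$ uniformly on $\overline U$, so $\{\gamma_m\}|_U$ is trivially a normal family and $p \in Eq(\{\gamma_m\})$.

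For the opposite inclusion I would first observe that $Eq(\{\gamma_m\})$ is open by definition and that the dimension formula $\dim Ker(\gamma) + \dim Im(\gamma) = n-1$ forces $Im(\gamma)$ to be a single point whenever $Ker(\gamma)$ is a hyperplane; hence $Ker(\gamma) \setminus Im(\gamma)$ is dense in $Ker(\gamma)$, and it is enough to show that no point of $Ker(\gamma) \setminus Im(\gamma)$ belongs to $Eq(\{\gamma_m\})$.

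The key step is an argument by contradiction. Assume $p \in Ker(\gamma) \setminus Im(\gamma)$ lies in $Eq(\{\gamma_m\})$, so some neighborhood $U$ of $p$ supports a subsequence of $(\gamma_m)$ converging uniformly to a continuous limit $g$. Pick a projective line $\ell$ through $p$ with $\ell \cap Ker(\gamma) = \{p\}$, which exists because $Ker(\gamma)$ is a proper subspace. Applying Proposition~\ref{p:disc} to this uniformly convergent subsequence produces a further subsequence (still denoted $(\gamma_m)$) and a projective line $\ell_p$ with the property that, for every open $W$ containing $p$ with $\overline W \subset U$, the cluster set of $(\gamma_m(\overline W \cap \ell))$ is $\ell_p$. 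The passage to a further subsequence does not destroy uniform convergence, so the cluster set also equals $g(\overline W \cap \ell)$; thus $g(\overline W \cap \ell) = \ell_p$ for all such $W$. Shrinking $W$ down to $\{p\}$, continuity of $g$ collapses the left-hand side to $\{g(p)\}$ while the right-hand side stays equal to the full projective line $\ell_p$; the desired contradiction follows.

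The main obstacle I foresee is the bookkeeping of subsequences: normality at $p$ only produces some uniformly convergent subsequence, not uniform convergence of the whole sequence $(\gamma_m)$, so Proposition~\ref{p:disc} must be applied to the already-extracted subsequence, and one has to check that the additional extraction coming from Proposition~\ref{p:disc} still enjoys the uniform convergence used to identify the cluster set with $g(\overline W \cap \ell)$. Once this is set up correctly, the dimension-based mismatch between a point and a projective line does the rest.
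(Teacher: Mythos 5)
Your argument is correct and follows exactly the route the paper intends: the paper states this corollary as an immediate consequence of Proposition \ref{p:disc} and supplies no further details, and your two inclusions (uniform convergence on compact sets off $Ker(\gamma)$ for one direction; the full line $\ell_p$ produced by Proposition \ref{p:disc} clashing with continuity of a normal-family limit at $p$, after reducing to $Ker(\gamma)\setminus Im(\gamma)$ by openness of $Eq$ and the dimension formula, for the other) are precisely the omitted steps. The subsequence bookkeeping you flag is handled correctly, since the further extraction in Proposition \ref{p:disc} preserves the uniform convergence already obtained from normality.
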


\section{The limit set according to Chen and Greenberg}

The main result of  this section is Lemma \ref{l:conv}, which
 is useful for proving  properties about
subgroups of $PU(1,n)$. This lemma is used in the following   Section \ref {sec:main-thm} for proving Theorem 1. We use  \ref{l:conv}  also in this section,
 to give direct proofs of several important results from
  \cite{CG} used in the sequel.

\begin{definition} Let $G$ be a discrete subgroup of $\rm{Iso}(\mathbb {H}^n)$. The {\it limit set}
of $G$ in the sense of Chen-Greenberg, denoted $\Lambda_{CG}(G)$
or simply $\Lambda_{CG}$, is the set of accumulation points in
$\overline{\mathbb {H}}^n_\mathbb {C}$ of orbits of points in  $
\mathbb {H}^n_\mathbb {C}$.
\end{definition}

\begin{lemma} \label{l:conv}
Let $G\subset PU(1,n)$ be a discrete group, $( \gamma_m)_{m\in
\mathbb {N}}\subset G$  a sequence of distinct elements and
$ \gamma  \in QP(n,\mathbb {C})$ such that $( \gamma_m)$ converges to
$ \gamma  $ in the sense of quasi-projective transformations, then:
\begin{enumerate}
\item \label{i:conv1} The image $Im( \gamma  )$ is a point in $\partial
\mathbb {H}^n_\mathbb {C}$.

\item \label{i:conv2} The kernel $Ker( \gamma  )$ is a hyperplane tangent to
$\partial \mathbb {H}^n_\mathbb {C}$.

 \item \label{i:conv3} One has
$Ker( \gamma  )\cap \partial \mathbb {H}^n_\mathbb {C}\in
\Lambda_{CG}(G)$.
\end{enumerate}
\end{lemma}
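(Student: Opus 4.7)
The plan is to work with matrix lifts and exploit both the invariance of the Hermitian form and the discreteness of $G$. Choose lifts $\widetilde\gamma_m\in U(1,n)$ and set $\lambda_m:=\vert\widetilde\gamma_m\vert$, so that after passing to a subsequence $\lambda_m^{-1}\widetilde\gamma_m\to\widetilde\gamma$ with $[\widetilde\gamma]_n=\gamma$. A preliminary observation is that $\lambda_m\to\infty$. Because every $A\in U(1,n)$ satisfies $A^{-1}=J^{-1}A^{*}J$ with $J=\mathrm{diag}(-1,1,\ldots,1)$ a Euclidean isometry, one has $\vert A^{-1}\vert$ comparable to $\vert A\vert$; if $\lambda_m$ stayed bounded a subsequence of $\widetilde\gamma_m$ would converge in $U(1,n)$, yielding a convergent sequence of pairwise distinct elements in the discrete group $G$, which is impossible.

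The key algebraic input is the identity $\widetilde\gamma_m^{*}J\widetilde\gamma_m=J$. Multiplying by $\lambda_m^{-2}$ and letting $m\to\infty$ yields $\widetilde\gamma^{*}J\widetilde\gamma=0$, i.e.\
\[
\langle\widetilde\gamma u,\widetilde\gamma v\rangle=0\quad\text{for all } u,v\in\mathbb{C}^{n+1}.
\]
Thus $\mathrm{Im}(\widetilde\gamma)$ is a nonzero isotropic subspace for a form of signature $(1,n)$, and is therefore one dimensional. This already gives (i): $\mathrm{Im}(\gamma)$ is a single null point, hence lies in $\partial\mathbb{H}^n_{\mathbb{C}}$. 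By rank-nullity, $\mathrm{Ker}(\gamma)$ is a projective hyperplane.

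For the tangency assertion in (ii), apply the same construction to $(\widetilde\gamma_m^{-1})$ along a further subsequence, producing $\widetilde\beta$ with $\mathrm{Im}(\widetilde\beta)=\mathbb{C}w$ for some null vector $w$. The $U(1,n)$-invariance of the form reads $\langle\widetilde\gamma_m u,v\rangle=\langle u,\widetilde\gamma_m^{-1}v\rangle$; dividing by $\lambda_m$ (legitimate since $\vert\widetilde\gamma_m^{-1}\vert$ is comparable to $\lambda_m$) and passing to the limit gives $\langle\widetilde\gamma u,v\rangle=\langle u,\widetilde\beta v\rangle$ for all $u,v$. For $u\in\mathrm{Ker}(\widetilde\gamma)$ the left side vanishes identically in $v$, so $u$ is orthogonal to $\mathrm{Im}(\widetilde\beta)=\mathbb{C}w$; comparing dimensions forces $\mathrm{Ker}(\widetilde\gamma)=w^\perp$. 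Since $w$ is null, the projective hyperplane $w^\perp$ meets $\overline{\mathbb{H}}^n_{\mathbb{C}}$ only at $[w]$, so it is tangent to $\partial\mathbb{H}^n_{\mathbb{C}}$.

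Finally, (iii) is extracted from Proposition \ref{p:completes} applied to $\gamma_m^{-1}\to\delta$. The symmetric analysis gives $\mathrm{Ker}(\delta)=v^\perp$ where $v$ spans $\mathrm{Im}(\widetilde\gamma)$, another hyperplane tangent to $\partial\mathbb{H}^n_{\mathbb{C}}$ and therefore disjoint from the open ball. Hence any $p\in\mathbb{H}^n_{\mathbb{C}}$ lies outside $\mathrm{Ker}(\delta)$, and the proposition yields $\gamma_m^{-1}(p)\to\mathrm{Im}(\delta)=[w]$, exhibiting $[w]=\mathrm{Ker}(\gamma)\cap\partial\mathbb{H}^n_{\mathbb{C}}$ as an accumulation point of a $G$-orbit in $\mathbb{H}^n_{\mathbb{C}}$. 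I expect the main obstacle to be the identification $\mathrm{Ker}(\widetilde\gamma)=w^\perp$ in (ii): it requires combining the limits of $(\widetilde\gamma_m)$ and $(\widetilde\gamma_m^{-1})$ via the invariant form, whereas everything else reduces to discreteness, rank-nullity, and the quasi-projective convergence already established.
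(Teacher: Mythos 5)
Your proof is correct, but it follows a genuinely different route from the paper's. The paper argues dynamically and geometrically: part (i) comes from the open mapping property of the holomorphic map $\gamma$ together with the proper discontinuity of the action on $\mathbb{H}^n_{\mathbb{C}}$ (Theorem \ref{t:acc}), which forces $\gamma(\mathbb{H}^n_{\mathbb{C}}\setminus Ker(\gamma))$ to sit inside $Im(\gamma)\cap\partial\mathbb{H}^n_{\mathbb{C}}$; part (ii) excludes $Ker(\gamma)$ meeting the open ball by invoking Proposition \ref{p:disc} to produce a whole complex line inside $\mathbb{H}^n_{\mathbb{C}}$ (impossible), and then shows $Ker(\gamma)$ must touch the closed ball by a homeomorphism argument; part (iii) uses the inverse sequence and a compactness argument on $\mathbb{P}^n_{\mathbb{C}}\setminus Ker(\gamma)$. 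You instead extract everything from the single algebraic identity $\widetilde\gamma_m^{*}J\widetilde\gamma_m=J$: after normalizing and letting $m\to\infty$ you get $\widetilde\gamma^{*}J\widetilde\gamma=0$, so $Im(\widetilde\gamma)$ is isotropic and hence a null line (giving (i) and the hyperplane claim at once), and the duality $\langle\widetilde\gamma u,v\rangle=\langle u,\widetilde\beta v\rangle$ identifies $Ker(\widetilde\gamma)$ with $w^{\perp}$ for $w$ null, which is exactly the tangency statement. Your route buys a cleaner, more self-contained argument that never needs Proposition \ref{p:disc} or the fact that $\mathbb{H}^n_{\mathbb{C}}$ contains no complex lines, and it delivers the duality statements of Corollary \ref{c:inversas} ($Im(\tau)=Ker(\gamma)\cap\partial\mathbb{H}^n_{\mathbb{C}}$ and vice versa) for free as $Ker(\widetilde\gamma)=Im(\widetilde\beta)^{\perp}$; the price is that it leans on the specific Hermitian structure of $U(1,n)$, whereas the paper's softer argument mostly uses only that the action on the ball is properly discontinuous. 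Two small points worth making explicit: you should note that the subsequential limit $[\widetilde\gamma]_n$ of your normalized $U(1,n)$-lifts coincides with the given quasi-projective limit $\gamma$ (this follows since two quasi-projective maps agreeing on a dense open set have proportional lifts, hence the same kernel and image), and that the limit matrix $A$ in your boundedness argument is invertible because $A^{*}JA=J$ forces $|\det A|=1$, so the contradiction with discreteness is legitimate.
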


\begin{proof} Let us prove by contradiction
 (\ref{i:conv1}). Since $\gamma$ is holomorphic the set  $ \gamma  (
\mathbb {H}^n_\mathbb {C}\setminus Ker( \gamma  ))$ is an open set  in the projective subspace $Im(\gamma)$. On the other hand,
by  \ref{t:acc} the set $ \gamma  (
\mathbb {H}^n_\mathbb {C}\setminus Ker( \gamma  ))$ is  contained
in $Im( \gamma  )\cap\partial\mathbb {H}^n_\mathbb {C}$, which has empty interior ( whenever  $Im( \gamma  )$ is not a point), which is  a contradiction.  In the rest of the proof the unique element in $Im( \gamma  )$ 	
will be denoted by $q$.

Now let us prove (\ref{i:conv2}). Assume that $Ker( \gamma  )\cap
\mathbb {H}^n$ is not empty. Thus we can choose $p\in
Ker( \gamma  )\setminus Im( \gamma  )\cap \mathbb {H}^n$. Applying
Proposition \ref{p:disc} to $(\gamma_m), \, \gamma  , \,p, \,
\mathbb {H}^n_\mathbb {C}$ it it followed that there is  line  $\ell_p$
contained in $\mathbb {H}^n_\mathbb {C}$, which is a contradiction, since $\Bbb{H}^n_\Bbb{C}$ does not contains complex lines. Therefore  $Ker( \gamma  )\cap \mathbb {H}^n=\emptyset$.

Assume now that
$Ker( \gamma  )\cap\overline{\mathbb H}^n_\mathbb {C}=\emptyset$. From this and (\ref{i:conv1}) of the present lemma we conclude  that
$ \gamma_m$ converges uniformly to the constant function $q$ on
$\overline{ \mathbb {H}}^n_\mathbb {C}$. Let $x\in \mathbb {H}^n_\mathbb {C}$ and
$U$ be a neighborhood of $p$ such that
		$U\cap\mathbb {H}^n_\mathbb {C}\subset\mathbb {H}^n_\mathbb {C}\setminus\{x\}$.
The uniform convergence  implies  that  there is  a
natural number $n_0$ such that
$ \gamma_m(\overline{\mathbb H}^n_\mathbb {C})\subset U\cap
\mathbb {H}^n_\mathbb {C}\subset\mathbb {H}^n_\mathbb {C}\setminus\{x\}$ for each
$m>n_0$. This is a contradiction since each $ \gamma_m$ is a
homeomorphism.

Let us prove (\ref{i:conv3}). By \ref{p:completes} we can assume
that there is $\tau\in QP(n\c)$ such that $( \gamma  ^{-1}_m)$
converges to $\tau$ in the sense of quasi-projective transformations. Thus by (\ref{i:conv1}) of the present lemma
we have that $Im(\tau)$ is a point $p$ in $\Lambda_{Kul}( G  )$. We
claim that $\{p\}=Im(\tau)=Ker( \gamma  )\cap \partial
\mathbb {H}^n_\mathbb {C}$. Assume  this  does not
happen; let $x \in \mathbb {H}^n_\mathbb {C}$, then
$ \gamma  ^{-1}_m(x)\xymatrix{ \ar[r]_{m \rightarrow \infty}&} p$,
thus $\{ \gamma  ^{-1}_m(x):m\in \mathbb {N}\}\cup \{ p\}$ is a compact
set which lies in $\mathbb {P}^n_\mathbb {C}\setminus Ker( \gamma  )$, thus  $x= \gamma_m( \gamma  ^{1}_m(x))\xymatrix{ \ar[r]_{m \rightarrow
\infty}&} q$.
Which is a contradiction.

\end{proof}

From the proof of the previous result one gets:

\begin{corollary} \label{c:inversas}
Let $G\subset PU(1,n)$ be a discrete group, $( \gamma_m)_{m\in
\mathbb {N}}\subset G$  a sequence of distinct elements and
$ \gamma  \in QP(n\mathbb {C})$ such that $( \gamma_m)$ converges to
$ \gamma  $ in the sense of quasi-projective transformations. Then there are a subsequence of $(\gamma_m)$, still denoted $(\gamma_m)$, and an element  $ \tau  \in QP(n,\mathbb {C})$ such that:

\begin{enumerate}
 \item  The sequence $( \gamma_m^{-1})$ converges to $ \tau  $ in the sense of quasi-projective transformations;

\item The image $Im(\tau)$ is a point in $\partial
\mathbb {H}^n_\mathbb {C}$ and  $Ker( \tau)$ is a hyperplane tangent to
$\partial \mathbb {H}^n_\mathbb {C}$.

\item   One has $Im(\tau)=Ker(\gamma)\cap\partial\mathbb {H}^n_\mathbb {C}$;

\item  Also $Im(\gamma)=Ker(\tau)\cap\partial\mathbb {H}^n_\mathbb {C}$.
\end{enumerate}
\end{corollary}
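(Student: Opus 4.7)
The plan is to derive the corollary as a direct consequence of Lemma \ref{l:conv} applied to the sequence of inverses, combined with the brief compactness/contradiction argument that already appears implicitly at the end of the proof of part (iii) of that lemma.

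First I would note that, since $G$ is discrete and the $\gamma_m$ are distinct, the inverses $\gamma_m^{-1}\in G$ are also distinct. Proposition \ref{p:completes} then yields a subsequence along which $\gamma_m^{-1}$ converges in the quasi-projective sense to some $\tau\in QP(n,\mathbb{C})$; after passing to this subsequence (and relabelling both $(\gamma_m)$ and $(\gamma_m^{-1})$ accordingly), assertion (i) is established. Applying Lemma \ref{l:conv} directly to the sequence $(\gamma_m^{-1})$ immediately gives (ii): $Im(\tau)$ is a single point of $\partial\mathbb{H}^n_\mathbb{C}$, and $Ker(\tau)$ is a hyperplane tangent to $\partial\mathbb{H}^n_\mathbb{C}$.

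For (iii), set $p:=Im(\tau)$ and $q:=Im(\gamma)$. Since Lemma \ref{l:conv}(ii) guarantees that $Ker(\gamma)$ is a hyperplane tangent to $\partial\mathbb{H}^n_\mathbb{C}$, the intersection $Ker(\gamma)\cap\partial\mathbb{H}^n_\mathbb{C}$ consists of exactly one point, so it suffices to show $p\in Ker(\gamma)$. Suppose to the contrary that $p\notin Ker(\gamma)$, and pick any $x\in\mathbb{H}^n_\mathbb{C}$. Since $\gamma_m^{-1}(x)\to p$, the set $K:=\{\gamma_m^{-1}(x):m\in\mathbb{N}\}\cup\{p\}$ is compact and, by assumption, contained in $\mathbb{P}^n_\mathbb{C}\setminus Ker(\gamma)$. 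Uniform convergence of $\gamma_m\to\gamma$ on $K$ then forces
\[
x\,=\,\gamma_m\bigl(\gamma_m^{-1}(x)\bigr)\;\longrightarrow\;\gamma(p)\,=\,q,
\]
which is absurd, since $x\in\mathbb{H}^n_\mathbb{C}$ was arbitrary while $q\in\partial\mathbb{H}^n_\mathbb{C}$ is a fixed single point. Hence $p\in Ker(\gamma)$, proving (iii).

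Finally, (iv) follows by applying exactly the same compactness argument with the roles of $\gamma$ and $\tau$ (and of the sequences $(\gamma_m)$ and $(\gamma_m^{-1})$) interchanged, using that $(\gamma_m^{-1})^{-1}=\gamma_m$ converges to $\gamma$: one obtains $Im(\gamma)\in Ker(\tau)$, and the tangency of $Ker(\tau)$ to $\partial\mathbb{H}^n_\mathbb{C}$ gives equality. The only step that needs some care is verifying that the set $K$ in the argument for (iii) is genuinely compact and disjoint from $Ker(\gamma)$; both properties are immediate from the convergence $\gamma_m^{-1}(x)\to p$ together with the standing assumption $p\notin Ker(\gamma)$, so no serious obstacle arises and the proof is essentially a repackaging of ingredients already present in Lemma \ref{l:conv}.
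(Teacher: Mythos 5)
Your proof is correct and follows essentially the same route as the paper, which derives this corollary directly from the proof of Lemma \ref{l:conv}: part (iii) of that proof already contains verbatim the compactness/contradiction argument with $K=\{\gamma_m^{-1}(x)\}\cup\{p\}$ that you use for your item (iii), and your items (i), (ii) and (iv) are obtained exactly as the paper intends, by Proposition \ref{p:completes} and by applying Lemma \ref{l:conv} to the sequence of inverses and then swapping the roles of $\gamma$ and $\tau$.
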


 \begin{theorem}\label{t.lim-indep}
Let $G$ be as above. Then the limit set $\Lambda_{CG}(G)$ is
independent of the choice of orbit.
\end{theorem}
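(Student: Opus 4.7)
The plan is to deduce the theorem almost directly from Lemma \ref{l:conv} and Proposition \ref{p:completes}. For any two base points $x,y \in \mathbb{H}^n_\mathbb{C}$, write $\Lambda_x, \Lambda_y \subset \overline{\mathbb{H}}^n_\mathbb{C}$ for the respective sets of accumulation points of the orbits $Gx, Gy$. By symmetry, it suffices to prove $\Lambda_x \subseteq \Lambda_y$.

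First I would pick $p \in \Lambda_x$ and a sequence $(\gamma_m) \subset G$ of distinct elements with $\gamma_m(x) \to p$. Applying Proposition \ref{p:completes}, I pass to a subsequence (still denoted $(\gamma_m)$) converging in the sense of quasi-projective transformations to some $\gamma \in QP(n,\mathbb{C})$.

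Next I invoke Lemma \ref{l:conv}: the image $Im(\gamma)$ is a single point $q \in \partial \mathbb{H}^n_\mathbb{C}$, and $Ker(\gamma)$ is a projective hyperplane tangent to $\partial \mathbb{H}^n_\mathbb{C}$. The tangency statement is the crucial input because it implies $Ker(\gamma) \cap \mathbb{H}^n_\mathbb{C} = \emptyset$; consequently neither $x$ nor $y$ lies in $Ker(\gamma)$. Since the convergence is uniform on compact subsets of $\mathbb{P}^n_\mathbb{C} \setminus Ker(\gamma)$, we get $\gamma_m(x) \to \gamma(x) = q$, so $p = q$, and likewise $\gamma_m(y) \to \gamma(y) = q = p$. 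This exhibits $p$ as an accumulation point of the orbit $Gy$, i.e.\ $p \in \Lambda_y$, completing the inclusion.

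There is no real obstacle here: the content of the theorem has already been absorbed into Lemma \ref{l:conv}, whose proof used the Arzel\`a–Ascoli/discontinuity characterization in Theorem \ref{t:acc} to force $Im(\gamma)$ to collapse to a single boundary point and to keep $Ker(\gamma)$ off the ball. Once those two facts are in hand, the orbit of every interior point is forced to accumulate on the \emph{same} boundary point $q$ determined by the subsequence, so the accumulation set of $Gx$ depends only on $G$ and not on $x$.
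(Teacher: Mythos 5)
Your proposal is correct and follows the same route as the paper: the paper's proof also takes a cluster point of one orbit, extracts a sequence, and invokes Lemma \ref{l:conv} to conclude the other orbit accumulates at the same point. You have simply spelled out the details (passing to a quasi-projective limit via Proposition \ref{p:completes}, noting $Ker(\gamma)\cap\mathbb{H}^n_\mathbb{C}=\emptyset$ so that $\gamma_m(x)$ and $\gamma_m(y)$ both converge to the single point $Im(\gamma)$) that the paper leaves implicit in the phrase ``it follows from Lemma \ref{l:conv}''.
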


\begin{proof} Let $x,y\in \mathbb {H}^n_\mathbb {C}$, and let $z$ be a cluster point of $Gy$.
Then there exists a sequence $(g_m)\subset G$  such that $g_m(y)$
converges to $z$. It  follows from lemma \ref{l:conv}  that  $z$  is
also a cluster point of $(g_m(x))$, which ends the proof.
\end{proof}

It is clear from the definitions that the limit set
$\Lambda_{CG}(G)$ is a closed, $G$-invariant set, and it is empty
if and only if $G$ is finite (since every sequence in a compact
set contains convergent subsequences). Moreover one has:

\begin{theorem}  Let $G$ be a discrete group such that   $\Lambda_{CG}(G)$  has more than two points, then it has infinitely many points.

\end{theorem}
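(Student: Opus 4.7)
The plan is to argue by contradiction: suppose $\Lambda_{CG}(G)$ is finite with exactly $k$ points, where $3 \le k < \infty$, and produce a violation of injectivity of some group element. The starting observation is that $\Lambda_{CG}(G)$ being non-empty forces $G$ to be infinite (otherwise every orbit is finite, hence closed and without accumulation points). So I may extract from $G$ a sequence $(\gamma_m)$ of pairwise distinct elements.

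Next I would apply Proposition \ref{p:completes} to pass to a subsequence that converges, in the quasi-projective sense, to some $\gamma \in QP(n,\mathbb{C})$. Lemma \ref{l:conv} then gives me everything I need about $\gamma$: its image $Im(\gamma)$ is a single point $q \in \Lambda_{CG}(G)$, and $Ker(\gamma)$ is a projective hyperplane tangent to $\partial\mathbb{H}^n_\mathbb{C}$, so $Ker(\gamma)\cap\partial\mathbb{H}^n_\mathbb{C}$ consists of exactly one point $p \in \Lambda_{CG}(G)$. The key structural fact I will exploit is that $\Lambda_{CG}(G)\setminus\{p\}$ is entirely disjoint from $Ker(\gamma)$: indeed, $\Lambda_{CG}(G)\subset\partial\mathbb{H}^n_\mathbb{C}$, and the tangency forces $Ker(\gamma)\cap\partial\mathbb{H}^n_\mathbb{C}=\{p\}$.

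Now comes the main step. For any $x \in \Lambda_{CG}(G)\setminus\{p\}$, the convergence of $(\gamma_m)$ to $\gamma$ uniformly on compact subsets of $\mathbb{P}^n_\mathbb{C}\setminus Ker(\gamma)$ gives $\gamma_m(x) \to q$. But $\Lambda_{CG}(G)$ is $G$-invariant and finite, so the values $\gamma_m(x)$ all lie in a finite subset of $\Lambda_{CG}(G)$; convergence to $q$ therefore forces $\gamma_m(x)=q$ for all sufficiently large $m$. Since the hypothesis $k\ge 3$ ensures that $\Lambda_{CG}(G)\setminus\{p\}$ contains two distinct points $x$ and $y$, I can pass to a further tail of the sequence on which $\gamma_m(x)=\gamma_m(y)=q$ simultaneously. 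This contradicts the injectivity of each $\gamma_m$.

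The conceptual obstacle — and the only nontrivial ingredient — is verifying that the quasi-projective limit really does collapse $\Lambda_{CG}(G)\setminus\{p\}$ onto the single point $q$; once that is in hand, the finiteness hypothesis provides no room for the distinct images required by injectivity. This reduction is exactly what Lemma \ref{l:conv} supplies, so no further machinery beyond what has already been developed in Sections 2 and 3 is needed. The argument is uniform in $n$ and does not require a case analysis on the classification of elements in Definition \ref{d:classif}.
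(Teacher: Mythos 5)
Your proof is correct, but it follows a genuinely different route from the paper's. The paper argues algebraically: since $\Lambda_{CG}(G)$ is a finite invariant set, the subgroup $\tilde G=\bigcap_{x\in\Lambda_{CG}(G)}Isot(x,G)$ has finite index in $G$, and then the classification of elements (Definition \ref{d:classif}), which bounds the number of boundary fixed points, is invoked to conclude that $\tilde G$ is trivial, whence $G$ is finite --- a contradiction. You instead run the dynamical machinery of Sections 2--3: extract a quasi-projectively convergent sequence $(\gamma_m)\to\gamma$ via Proposition \ref{p:completes}, use Lemma \ref{l:conv} to see that $Ker(\gamma)$ meets $\partial\mathbb{H}^n_{\mathbb{C}}$ in a single point $p$ so that $\Lambda_{CG}(G)\setminus\{p\}$ lies in the region of uniform convergence, and then observe that a sequence in the finite set $\Lambda_{CG}(G)$ converging to $q=Im(\gamma)$ must be eventually constant, forcing two distinct points $x,y\in\Lambda_{CG}(G)\setminus\{p\}$ (available since $k\ge 3$) to have equal images under $\gamma_m$ for large $m$, contradicting injectivity. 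Each step of this is justified by results the paper has already established, so the argument is complete. What your approach buys is that it sidesteps the most delicate point in the paper's proof: the assertion that every element of $PU(1,n)$ has at most two fixed points on $\partial\mathbb{H}^n_{\mathbb{C}}$ is not literally true for elliptic elements (a complex reflection fixes a whole boundary sphere of a totally geodesic complex subspace), so the paper's one-line conclusion that $\tilde G$ is trivial really needs a supplementary argument (e.g., via Proposition \ref{c:clasfin}) to dispose of elliptic elements; your pigeonhole-plus-injectivity argument needs no case analysis at all. The price is that you invoke the heavier quasi-projective apparatus, whereas the paper's intended argument is purely group-theoretic once the classification is granted.
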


\begin{proof}
Assume that $\Lambda_{CG}(G)$
 is finite with at least 3 points. Then $$\tilde G=\bigcap_{x\in
 \Lambda_{CG}(G)}Isot(x,G)$$ is a normal subgroup of $G$ with finite
 index.  Moreover, by \ref{d:classif} each element in $PU(1,n)$ has at
 most 2 fixed points in $\partial \mathbb {H}^n_\mathbb {C}$. Hence $\tilde
 G$ is trivial and therefore $G$ is finite, which is a contradiction.
 \end{proof}

\begin{definition}
The group $G$ is elementary if $\Lambda_{CG}(G)$ has at most two
points.
\end{definition}

\begin{theorem} \label{t:minimal} If $G\subset PU(1,n)$ is a non-elementary discrete group, then
 $\Lambda_{CG}(G)$ is the unique closed minimal set.
\end{theorem}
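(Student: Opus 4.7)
The plan is to prove the theorem in two steps: (i) $\Lambda_{CG}(G)$ is itself minimal, meaning $\overline{G\cdot x}=\Lambda_{CG}(G)$ for every $x\in\Lambda_{CG}(G)$; and (ii) every non-empty closed minimal $G$-invariant subset $M$ of $\overline{\mathbb H}^n_{\mathbb C}$ equals $\Lambda_{CG}(G)$.

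The main tool throughout is the quasi-projective convergence machinery of Section~2. Given any sequence of distinct elements $(\psi_k)\subset G$, Proposition~\ref{p:completes}, Lemma~\ref{l:conv} and Corollary~\ref{c:inversas} together produce, after passing to a subsequence, a quasi-projective map $\gamma$ with $\mathrm{Im}(\gamma)=\{q\}$ and $\mathrm{Ker}(\gamma)\cap\overline{\mathbb H}^n_{\mathbb C}=\{w\}$ for some $q,w\in\Lambda_{CG}(G)$, such that $\psi_k(c)\to q$ uniformly on compact subsets of $\overline{\mathbb H}^n_{\mathbb C}\setminus\{w\}$. I call $w$ the repelling point of the sequence.

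For step (i), fix $x,y\in\Lambda_{CG}(G)$. Since $y$ is an accumulation point of some orbit from $\mathbb H^n_{\mathbb C}$, I can select $(\psi_k)\subset G$ whose quasi-projective limit $\gamma$ satisfies $\mathrm{Im}(\gamma)=\{y\}$; let $w$ be its repelling point. If $x\neq w$, then $\psi_k(x)\to y$ and $y\in\overline{G\cdot x}$. If $x=w$, I pick $g\in G$ with $g(x)\neq x$ (see the obstacle below) and replace the sequence by $(\psi_k g)\subset G$: since $g(x)\neq x=w$, one has $(\psi_k g)(x)=\psi_k(g(x))\to y$, and $(\psi_k g)(x)\in G\cdot x$, giving $y\in\overline{G\cdot x}$. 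Step (ii) then follows the same dichotomy: picking $m\in M$, if $m\in\mathbb H^n_{\mathbb C}$ Theorem~\ref{t.lim-indep} gives $\Lambda_{CG}(G)\subset\overline{G\cdot m}\subset M$, and if $m\in\partial\mathbb H^n_{\mathbb C}$ I choose $(\psi_k)\to\gamma$ with $\mathrm{Im}(\gamma)=q\in\Lambda_{CG}(G)$ and argue, replacing $m$ by $g(m)$ if needed, that $\psi_k(m)\to q\in M\cap\Lambda_{CG}(G)$; in either case the minimality of $M$ together with step~(i) forces $M=\Lambda_{CG}(G)$.

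The main obstacle is the auxiliary fact, used throughout, that a non-elementary discrete subgroup of $PU(1,n)$ has no global fixed point in $\overline{\mathbb H}^n_{\mathbb C}$. For an interior fixed point this is immediate, since the stabiliser in $PU(1,n)$ of a point of $\mathbb H^n_{\mathbb C}$ is isomorphic to the compact group $U(n)$, so any discrete subgroup sitting there is finite and thus elementary by Proposition~\ref{c:clasfin}. For a boundary fixed point one analyses the structure of the parabolic stabiliser, whose unipotent radical is a Heisenberg group and whose Levi factor contains a non-compact $\mathbb R^+$ dilation component: any non-trivial dilation conjugates a given Heisenberg translation into arbitrarily small ones, so discreteness forces either the Heisenberg part or the dilation part of $G$ to be trivial, and in either case $|\Lambda_{CG}(G)|\le 2$. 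This classical fact goes back to Chen and Greenberg~\cite{CG}.
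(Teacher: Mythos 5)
Your proof is correct and follows essentially the same route as the paper's: both rest on the source--sink dynamics supplied by Lemma~\ref{l:conv} and Corollary~\ref{c:inversas} (uniform convergence to a constant $q\in\Lambda_{CG}(G)$ away from the single point where $\mathrm{Ker}(\gamma)$ meets $\overline{\mathbb H}^n_{\mathbb C}$), combined with the fact that a non-elementary discrete group fixes no point of $\overline{\mathbb H}^n_{\mathbb C}$, which the paper imports from Kamiya's Theorem~3.1 rather than deriving from the stabiliser structure as you sketch. Your split into minimality of $\Lambda_{CG}(G)$ followed by uniqueness is just a cleaner packaging of the paper's single argument.
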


\begin{proof}
Let $S$ be a closed invariant set and  $z\in S$. By \ref{l:conv} we know  there is an acumulation point $\tilde z\in S$  of $Gz$ such that $\tilde z\in \Lambda_{CG}(G)$.  Let $y\in \Lambda_{CG}(G)$,
 then there is a sequence $(g_m)\subset G
$ and a point $p\in \mathbb {H}^n_\mathbb {C}$ such that $g_m(p)$
converges to $y $. By Lemma \ref{l:conv} there are points $p,q\in
\partial \mathbb {H}^n_\mathbb {C}$ such that we can assume that $g_m$
converges uniformly to $y$ in compact sets of
$\overline{\mathbb H}_\mathbb {C}^n\setminus \{p\}$. Now, it is well know (see Theorem 3.1  in
 \cite{kamiya})  that there is a transformation $g\in G$ such that
$g(p)\neq p$. Hence  we can assume that $\tilde z\neq q$,  so
 we conclude that $g_m(\tilde z)$ converges to $y$.

\end{proof}

\begin{remark} Notice that the previous result implies that if
$G$  non-elementary, then $\Lambda_{CG}(G)$ is
a nowhere dense perfect set.
In other words  $\Lambda_{CG}(G)$
has empty interior and every orbit in $\Lambda_{CG}(G)$  is dense
in $\Lambda_{CG}(G)$.
\end{remark}

\section{On the equicontinuity region}\label{sec:main-thm}

The sphere $ \partial \mathbb {H}^n_\mathbb {C}$ has real codimension 1 in
$\mathbb  P^n_{\mathbb  C}$  and its tangent bundle   contains a maximal  subbundle  which is a complex subbundle of
$T\mathbb  P^n_{\mathbb  C}\vert_{\partial \mathbb {H}^n_\mathbb {C}}$.  In fact
 this subbundle  defines the canonical contact structure on the sphere. We let
$\mathcal{C}(\partial \mathbb {H}^n_\mathbb {C} )$ be the union of all complex projective hyperplanes tangent to $ \partial \mathbb {H}^n_\mathbb {C}$.
Given a  discrete subgroup $G\subset PU(1,n)$ set:
$$\mathcal{C}(G):=  \mathcal{C}(\partial \mathbb {H}^n_\mathbb {C}) \vert_{\Lambda_{CG}(G)}   =  \bigcup_{p\in \Lambda_{CG}(G)}\mathcal{H}_p \,,$$
where $\mathcal{H}_p$ denotes the hyperplane tangent to $\partial
\mathbb {H}^n_\mathbb  C$ at a point  $p \in \Lambda_{CG}(G)$.
It is clear that $\mathcal{C}(G)$ is a closed $G$-invariant subset of $\mathbb  P^n_{\mathbb  C}$.

The following lemma proves part of Theorem \ref{Thm:main}.

\begin{lemma}
The equicontinuity region of $G$ is:
$$Eq(G) = \mathbb  P^n_{\mathbb  C} \setminus \mathcal{C}(G)\,.$$

\end{lemma}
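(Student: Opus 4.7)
The plan is to establish the two inclusions $Eq(G) \subset \mathbb{P}^n_{\mathbb{C}} \setminus \mathcal{C}(G)$ and $\mathbb{P}^n_{\mathbb{C}} \setminus \mathcal{C}(G) \subset Eq(G)$ separately, using the quasi-projective machinery of Section 2 together with Lemma \ref{l:conv} and Corollary \ref{c:inversas}.

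For the inclusion $Eq(G) \subset \mathbb{P}^n_{\mathbb{C}} \setminus \mathcal{C}(G)$, I would fix an arbitrary $p \in \Lambda_{CG}(G)$ and exhibit a sequence in $G$ which fails to be equicontinuous at every point of $\mathcal{H}_p$. By the definition of the limit set there exist $x \in \mathbb{H}^n_{\mathbb{C}}$ and distinct $\gamma_m \in G$ with $\gamma_m(x) \to p$. Proposition \ref{p:completes} lets me pass to a subsequence converging in the sense of quasi-projective maps to some $\gamma \in QP(n,\mathbb{C})$. Lemma \ref{l:conv} forces $Im(\gamma)$ to be a single point $q \in \partial \mathbb{H}^n_{\mathbb{C}}$ and $Ker(\gamma)$ to be disjoint from $\mathbb{H}^n_{\mathbb{C}}$, so $x\notin Ker(\gamma)$ and continuity of $\gamma$ off its kernel gives $q = \lim \gamma_m(x) = p$. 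Now Corollary \ref{c:inversas} provides $\tau \in QP(n,\mathbb{C})$ with $\gamma_m^{-1} \to \tau$ quasi-projectively, $Ker(\tau)$ tangent to $\partial \mathbb{H}^n_{\mathbb{C}}$, and $Ker(\tau) \cap \partial \mathbb{H}^n_{\mathbb{C}} = Im(\gamma) = \{p\}$. By the uniqueness of the tangent hyperplane at $p$, $Ker(\tau) = \mathcal{H}_p$. Applying the corollary following Proposition \ref{p:disc} to the sequence $(\gamma_m^{-1})$ gives $Eq(\{\gamma_m^{-1}\}) = \mathbb{P}^n_{\mathbb{C}} \setminus \mathcal{H}_p$; since the equicontinuity region of a family is contained in that of any of its subfamilies, $\mathcal{H}_p \cap Eq(G) = \emptyset$. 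Taking the union over $p \in \Lambda_{CG}(G)$ yields the desired inclusion.

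For the reverse inclusion, I would fix $z \notin \mathcal{C}(G)$ and verify the normal family condition at $z$. Given any sequence of distinct elements $(\gamma_m) \subset G$, Proposition \ref{p:completes} yields a subsequence converging quasi-projectively to some $\gamma \in QP(n,\mathbb{C})$. By Lemma \ref{l:conv} the kernel $Ker(\gamma)$ is the complex projective hyperplane tangent to $\partial \mathbb{H}^n_{\mathbb{C}}$ at a point of $\Lambda_{CG}(G)$; therefore $Ker(\gamma) \subset \mathcal{C}(G)$, and in particular $z \notin Ker(\gamma)$. Choosing any open neighborhood $U$ of $z$ whose closure is compact and disjoint from $Ker(\gamma)$, the defining property of quasi-projective convergence delivers uniform convergence of the extracted subsequence on $\overline{U}$. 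This exhibits, near $z$, a uniformly convergent subsequence of every sequence in $G$, so $z \in Eq(G)$.

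The main technical hurdle is the identification $Im(\gamma) = \{p\}$ in the first inclusion: one must leverage the specific orbit $\gamma_m(x) \to p$ together with the two statements of Lemma \ref{l:conv} (image in $\partial \mathbb{H}^n_{\mathbb{C}}$, kernel disjoint from $\mathbb{H}^n_{\mathbb{C}}$) to transport the orbit limit through quasi-projective convergence, and then pivot to the inverse sequence via Corollary \ref{c:inversas} so that the \emph{kernel} of the limit is exactly $\mathcal{H}_p$ (rather than the tangent hyperplane at some other boundary point). Everything else is bookkeeping with the tools already assembled in Sections 2 and 3.
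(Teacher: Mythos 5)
Your proof is correct, and the second inclusion ($\mathbb{P}^n_{\mathbb{C}}\setminus\mathcal{C}(G)\subset Eq(G)$) is essentially identical to the paper's. The first inclusion, however, takes a genuinely different route. The paper picks a single parabolic or loxodromic element, uses Corollary \ref{c:inversas} to exclude the tangent hyperplane at one of its fixed points $x_0$ from $Eq(G)$, and then invokes Theorem \ref{t:minimal} (density of the orbit of $x_0$ in $\Lambda_{CG}(G)$) together with $G$-invariance and openness of $Eq(G)$ to sweep out all of $\mathcal{C}(G)$. You instead argue directly at an arbitrary $p\in\Lambda_{CG}(G)$: take an orbit sequence $\gamma_m(x)\to p$, identify $Im(\gamma)=\{p\}$ via the fact that $Ker(\gamma)$ misses $\mathbb{H}^n_{\mathbb{C}}$, pivot to the inverse sequence so that $Ker(\tau)=\mathcal{H}_p$, and apply the corollary following Proposition \ref{p:disc}. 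Your version buys something real: it needs neither the classification of isometries nor the minimality theorem, so it applies uniformly to elementary and non-elementary groups alike (the paper's appeal to Theorem \ref{t:minimal} is, strictly speaking, only justified in the non-elementary case), and it avoids the closure-and-invariance bookkeeping needed to pass from one fixed point to all of $\Lambda_{CG}(G)$. The only cosmetic point worth tightening is in the second inclusion: the neighborhood $U$ of $z$ should be chosen once, with compact closure inside the open set $\mathbb{P}^n_{\mathbb{C}}\setminus\mathcal{C}(G)$, so that it works simultaneously for every sequence rather than depending on the limit $\gamma$; since every kernel arising from Lemma \ref{l:conv} lies in the closed set $\mathcal{C}(G)$, this is immediate.
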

\begin{proof} Since $G$ is infinite and discrete, it contains at least a parabolic or a loxodromic element $\gamma$. Let  $x_0$  be a  fixed point of $\gamma$. By  Corollary \ref{c:inversas}  we can ensure that the hyperplane $\mathcal {H}_{x_0}$
tangent to $\partial \mathbb {H}^n$
 at  $x_0$ is contained in $\mathbb {P}^n_\mathbb {C}\setminus Eq( G  )$. On th other hand by Theorem \ref{t:minimal}, the closure of the orbit of $x_0$ is
  $\Lambda_{CG}(G)$.
  Thus $Eq( G  )\subset \mathbb {P}^{n}_\mathbb {C}\setminus\mathcal{C}( G  )$.
Let us now prove $ \mathbb {P}^{n}_\mathbb {C}\setminus\mathcal{C}( G  ) \subset Eq( G  )$.
Let $p\in \mathbb {P}^{n}_\mathbb {C}   \setminus\mathcal{C}( G  )$ and
$( \gamma_m)_{m\in \mathbb {N}}$ a sequence of distinct elements. By
Lemma  \ref{l:conv}  there  are points $p,q\in
\Lambda_{CG}( G  )$ and  a subsequence of $( \gamma_m)$, still
denoted $( \gamma_m)_{m\in \mathbb {N}}$, such that $ \gamma_m
\xymatrix{ \ar[r]_{m \rightarrow \infty}&} q$ uniformly on compact
sets of $\mathbb {P}^n_\mathbb {C}\setminus \mathcal{H}_p$, where $\mathcal{H}_p$ denotes the
hyperplane tangent  to $\partial \mathbb {H}^n$  at $p$.
This completes the proof.
\end{proof}

The following result completes the proof of Theorem \ref{Thm:main}.

\begin{corollary} \label{c:orbit}
Let $G\subset PU(1,n)$ be a discrete subgroup. Then
 $G$ acts discontinuously on $Eq(G)$ and, moreover, for every compact set $K\subset Eq( G  )$ the cluster
points of the orbit $ G   K$ lie in $\Lambda_{CG}( G  )$.

\end{corollary}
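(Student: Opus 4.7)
The plan is to establish the second assertion (cluster points of $GK$ lie in $\Lambda_{CG}(G)$) first, and then deduce the discontinuity of the action as an easy consequence. Throughout I will use the fact, already recorded in the previous lemma, that $Eq(G)=\mathbb{P}^n_{\mathbb{C}}\setminus\mathcal{C}(G)$, together with the observation that $\Lambda_{CG}(G)\subset\mathcal{C}(G)$ since each $p\in\Lambda_{CG}(G)$ lies on its own tangent hyperplane $\mathcal{H}_p$. In particular $Eq(G)\cap\Lambda_{CG}(G)=\emptyset$.

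First I would fix a compact set $K\subset Eq(G)$ and a cluster point $z$ of the orbit $GK$. By definition there exist distinct elements $g_m\in G$ and points $k_m\in K$ with $g_m(k_m)\to z$. Compactness of $K$ lets me pass to a subsequence with $k_m\to k\in K$. Applying Proposition \ref{p:completes}, I extract a further subsequence so that $(g_m)$ converges in the quasi-projective sense to some $\gamma\in QP(n,\mathbb{C})$. Lemma \ref{l:conv} then tells me that $\mathrm{Ker}(\gamma)$ is a hyperplane tangent to $\partial\mathbb{H}^n_{\mathbb{C}}$ at a point $p\in\Lambda_{CG}(G)$, so $\mathrm{Ker}(\gamma)=\mathcal{H}_p\subset\mathcal{C}(G)$. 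Invoking Corollary \ref{c:inversas} with the sequence $(g_m^{-1})$, I identify $\mathrm{Im}(\gamma)=\mathrm{Ker}(\tau)\cap\partial\mathbb{H}^n_{\mathbb{C}}$ with a point $q\in\Lambda_{CG}(G)$.

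Since $K\subset Eq(G)=\mathbb{P}^n_{\mathbb{C}}\setminus\mathcal{C}(G)$ and $\mathrm{Ker}(\gamma)\subset\mathcal{C}(G)$, the compact set $K$ is disjoint from $\mathrm{Ker}(\gamma)$. Hence $g_m\to\gamma$ uniformly on $K$, and because $\gamma|_K$ is the constant map with value $q$, I conclude $g_m(k_m)\to q$. Thus $z=q\in\Lambda_{CG}(G)$, proving the second statement.

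For the discontinuity, suppose the action on $Eq(G)$ were not properly discontinuous. Then there would exist a compact set $K\subset Eq(G)$ and infinitely many distinct elements $g_m\in G$ with $g_m(K)\cap K\neq\emptyset$; choosing $k_m\in K$ with $g_m(k_m)\in K$, the sequence $(g_m(k_m))$ has, by compactness of $K$, a cluster point $z\in K\subset Eq(G)$. By the first part this cluster point must also lie in $\Lambda_{CG}(G)$, which is disjoint from $Eq(G)$, giving the required contradiction. The main (though modest) technical point is the joint application of Lemma \ref{l:conv} and Corollary \ref{c:inversas}: the former places $\mathrm{Ker}(\gamma)$ inside $\mathcal{C}(G)$ (so $K$ avoids it and uniform convergence is available), while the latter ensures that the constant value $\mathrm{Im}(\gamma)$ on $K$ already belongs to $\Lambda_{CG}(G)$.
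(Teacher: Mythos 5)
Your proof is correct and follows essentially the same route as the paper: both rely on Proposition \ref{p:completes}, Lemma \ref{l:conv} and Corollary \ref{c:inversas} to show that along any sequence of distinct elements the images of $K$ collapse to a point of $\Lambda_{CG}(G)$, which is disjoint from $Eq(G)$. You merely reverse the order (cluster points first, then discontinuity) and make explicit two details the paper leaves implicit, namely that $K$ misses $Ker(\gamma)\subset\mathcal{C}(G)$ and that $Im(\gamma)\in\Lambda_{CG}(G)$ via the inverse sequence.
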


\begin{proof}
Assume on the contrary  that $G$ does not act discontinuously on $Eq(\Gamma)$. Then there is a compact set $K$ and a sequense of distinct elements $(\gamma_m)\subset G$, such that $\gamma_m(K)\cap K\neq \emptyset$. By Proposition \ref{p:completes}, there is a subsequence of  $(\gamma_m)$, still
denoted $(\gamma_m)$, and $\gamma\in QP(n,\mathbb {C})$, such that $(\gamma_m)$ converges to $\gamma$ in the sense of quasi-projective transformations. Moreover, by Lemma \ref{l:conv}, $Im(\gamma)$ is a
point $p$ in $\partial \mathbb {H}^n_\mathbb {C}$ and $Ker(\gamma)$ is a hyperplane tangent to $\partial \mathbb {H}^n_\mathbb {K}$. Therefore there is a neighborhood
$U$ of $p$ disjoint from $K$ and a natural number $n_0$ such that
$\gamma_m(K)\subset U$ for all $m>n_0$. This implies $\gamma_m(K)\cap K=\emptyset$, which is a contradition. Therefore $\Gamma$ acts discontinuously on $Eq(G)$.

From the previous argument we deduce also that for every compact set $K\subset Eq( G  )$ the cluster
points of $ G   K$ lie in $\Lambda_{CG}( G  )$.
\end{proof}

\bibliographystyle{amsplain}

\begin{thebibliography}{10}
\bibitem{AMS}
H.Abels, G.Margulis, G.Soifer.  {\it Semigroups containing proximal
linear maps}. Israel J. Math. {\bf 91} (1995),  1-30.

\bibitem{CNS} A. Cano, J.-P. Navarrete, J. Seade. {\it Complex Kleinian Groups}. Monograph in preparation.

\bibitem{CG}S. S. Chen and L. Greenberg.
{\it Hyperbolic Spaces}. In
``Contributions to Analysis".
Academic Press, New York.
 49-87, 1974.

\bibitem{Furs}
H. Furstenberg. {\it Boundary theory and stochastic processes on homogeneous spaces.}  In ``Harmonic analysis on homogeneous spaces", Proc. Symp. Pure Math., Vol. XXVI, Williams Coll., Williamstown, Mass. (1972),  193-229. Am. Math. Soc.  1973.

\bibitem{Goldman}
W.M. Goldman. {\it Complex Hyperbolic Geometry}, Oxford University Press, Oxford, 1999.

\bibitem{kamiya} S. Kamiya. {\it   Notes on nondiscrete subgroups of $\hat {\rm U}(1,\,n;\,F)$}. Hiroshima Math. J. {\bf 13}  (1983),  501-506.

\bibitem{kulkarni}
R. S. Kulkarni.  \textit{Groups with Domains of Discontinuity},
Math. Ann. {\bf  237},  253-272 (1978).

\bibitem{pablo}
J. P. Navarrete. \textit{On the Limit Set of Discrete Groups of
$PU(2,1)$},  Geom. Dedicata,  {\bf 122},  1-13 (2006).



\bibitem{rat}
J. G. Ratcliffe. {\it Foundations of Hyperbolic Manifolds}.
Springer, 1994.

\end{thebibliography}

\end{document}